\author{Kaveh Eftekharinasab}
\title{Sard's theorem for mapping between Fr\'{e}chet manifolds}
\address{Topology dept. \\ Institute of Mathematics of NAS of Ukraine \\ Te\-re\-shchen\-kivska st. 3, Kyiv, 01601 Ukraine}
\email{kaveh@imath.kiev.ua}
\keywords{Bounded Fr\'{e}chet manifolds, Sard's theorem, Fredholm maps}
\subjclass[2010]{
58B25, %
58B15,
58K05. %
}
\newtheorem{theorem}{Theorem}[section]
\newtheorem{lemma}{Lemma}[section]
\newtheorem{remk}{Remark}[section]
\newtheorem{prop}{Proposition}[section]
\newtheorem{defn}{Definition}[section]
\theoremstyle{definition}
\DeclareMathAlphabet{\mathpzc}{OT1}{pzc}{m}{it}
\DeclareMathOperator{\codim}{codim}
\DeclareMathOperator{\Ind}{Ind}
\DeclareMathOperator{\Img}{Img}
\DeclareMathOperator{\Aut}{Aut}
\DeclareMathOperator{\Iso}{Iso}
\begin{document}

\begin{abstract}
In this paper we prove an infinite-dimensional version of Sard's theorem for Fr\'{e}chet manifolds.
 Let $ M $ and $ N $ be bounded Fr\'{e}chet manifolds such that the topologies of their model Fr\'{e}chet spaces are defined by metrics with absolutely convex balls. 
 Let $ f: M \rightarrow N $ be an $ MC^k$-Lipschitz-Fredholm map with $  k > \max \lbrace {\Ind f,0} \rbrace $. Then the set of regular 
values of $ f $ is residual in $ N $.
\end{abstract}
\maketitle

\section{Introduction}
Sard's theorem in infinite-dimensional spaces may fail as showed in \cite{kupka} by giving a counterexample of
 real smooth map on a Hilbert space with critical values containing open set. However,  Smale \cite{smale}
 proved that if $ f : M \rightarrow N $ is a $ C^k $-Fredholm map between Banach manifolds with 
$ k > \max \lbrace \Ind f, 0\rbrace $, then the set of regular values of $ f $ is residual in $ N $. The 
assumption that $f$ must be Fredholm is necessary from the counterexample in \cite{kupka}.
In this paper we generalize the Smale's theorem to the Fr\'{e}chet manifolds case. To carry out this, at first we 
need to establish the stability of Fredholm operators under small perturbation which requires an 
appropriate topology on the space of linear continuous maps. But the space of continuous linear mappings of one Fr\'{e}chet space to another is not a Fr\'{e}chet space in general.
On the other hand, the general linear group of a Fr\'{e}chet space does not admit any non-trivial topological group structure. These defects put in question 
the way of obtaining  the stability of Fredholm operators.

There is a way to overcome aforementioned problems. Recently, in the suggestive paper~\cite{muller}, M\"{u}ller introduced the concept of bounded Fr\'{e}chet manifolds
and provided an inverse function theorem in the sense of Nash and Moser in this category. Such spaces arise in geometry and physical field theory and have
many desirable properties. For instance, the space of all smooth sections of a fiber bundle (over closed or non-compact manifolds), which
is the foremost example of infinite dimensional manifolds, has the structure of a bounded Fr\'{e}chet manifold, see~\cite[Theorem 3.34]{muller}. 
One of the essential ideas of this setting is to replace the space of all  continuous linear maps by the space $\mathcal{L}_{d',d}(E,F)$ of all linear Lipschitz continuous maps.
Then  $\mathcal{L}_{d',d}(E,F)$ is a topological group that has satisfactory properties. For example, the composition map 
$\mathcal{L}_{d,g}(F,G) \times \mathcal{L}_{d',d}(E,F) \rightarrow \mathcal{L}_{d',g}(E,G) $ is bilinear continuous. In particular,
the evaluation map $\mathcal{L}_{g,d}(E,F) \times E \rightarrow F$ is continuous. 

In this paper we will consider this category of manifolds. We introduce the concept of Lipschitz-Fredholm operator 
and prove  the stability theorem in Section~\ref{lp}. We then formulate Sard's theorem in Section~\ref{sar}. The key ingredient
to the proof of the theorem is the Local representation theorem (Theorem \ref{rp}) which is a consequence of the inverse function theorem. 
\section{Preliminaries and Notations}
In this section we set up our notations.
Most of the terminologies are taken from \cite{glockner}, but we avoid differing metric Fr\'{e}chet space with Fr\'{e}chet space.

\subsection{Lipschitz maps and the space $\mathcal{L}_{d,g}(E,F)$}
We denote by $(F,d)$ a Fr\'{e}chet space  whose 
topology is defined by a complete translational-invariant metric $d$. 
We define $  \parallel f \parallel_d \coloneq d(f,0) $ for $ f \in F $ and  write $ L.f $ instead of $ L(f) $ when $ L $ is a  linear map
between Fr\'{e}chet spaces. A metric with absolutely convex balls will be called a standard metric. Note that
every Fr\'{e}chet space  admits a standard metric which defines its topology: If $\alpha_n$ is an arbitrary sequence of 
positive real numbers converging to $zero$ and if $\rho_n$ is any sequence of continuous semi-norms defining the 
 topology of $F$. Then
\begin{equation*}
d_{\alpha,\, \rho}(e,f)\coloneq \sup_{n \in \mathbb{N}} \alpha _n \dfrac{\rho_n(e-f)}{1+\rho_n(e-f)}
\end{equation*}
is a metric on $F$ with the desired properties.
\begin{defn}
Suppose $(E,d)$ and $(F,g)$ are two Fr\'{e}chet spaces. Define $\mathcal{L}_{d,g}(E,F)$ to be the set of all globally Lipschitz linear maps, i.e., maps $ L: E \rightarrow F $ such that for them:
\begin{equation*}
\parallel L \parallel_{d,g}\, = \displaystyle \sup_{x \in E\setminus\{0\}} \dfrac{\parallel L.x \parallel_g}{\parallel x \parallel_d} < \infty.
\end{equation*}
We abbreviate $\mathcal{L}_{d}(E)=\mathcal{L}_{d,d}(E,E)$ and write $\parallel L\parallel_{d}\, = \,\parallel L\parallel_{d,d}$ for $L \in \mathcal{L}_{d}(E) $.
\end{defn}
\begin{remk}[\cite{glockner}, Remark 1.9] \label{p1}
 $\mathcal{L}_{d,g}(E,F)$ and the functions $\parallel .\parallel_{d,g}$ have the following useful properties:
\begin{description}
\item [i] $ \parallel L.x \parallel_g \,\leq \,\parallel L \parallel_{d,g} \,\parallel x \parallel_d $ for all $ x \in E $. Moreover, $ 0 \in \mathcal{L}_{d,g}(E,F)$ with $ \Vert \, 0 \, \Vert_{d,g} = 0 $. If $ L $ is not identically zero, then $ \parallel L_{d,g}  \parallel > 0 $. 
\item[ii] If $(G,h)$ is another Fr\'{e}chet space, then  
\begin{equation*}
\Vert H \circ L\parallel_{d,h} \, \leq \, \parallel H \parallel_{g,h} \, \parallel L\parallel_{d,g} ,
\end{equation*}
for $ L \in \mathcal{L}_{d,g}(E,F)$ and $ H \in \mathcal{L}_{g,h}(F,G)$.
\item[iii] If $ L,H \in \mathcal{L}_{d,g}(E,F) $, then
\begin{equation*}
\parallel L+H \parallel_{d,g} \, \leq \, \parallel L \parallel_{d,g} + \parallel H \parallel_{d,g} < \infty.
\end{equation*}
\item[iv] If g  is a standard metric, then
\begin{equation} \label{metric} 
 \begin{array}{cccc}
  D_{d,g}: \mathcal{L}_{d,g}(E,F) \times \mathcal{L}_{d,g}(E,F) \longrightarrow [0,\infty) , \\
(L,H) \longrightarrow \parallel L-H \parallel_{d,g}
 \end{array}
\end{equation}
is a translational-invariant metric on $\mathcal{L}_{d,g}(E,F)$ making it into an abelian topological group. 
\end{description}
\end{remk}
\begin{prop} [\cite {glockner}, Proposition 2.1]\label{p2}
Let $ (E,d) $ and $ (F,g) $ be Fr\'{e}chet spaces, and $ g $ a standard metric. Then the following hold:
\begin{description}
\item[i]$\mathcal{L}_{d,g}(E,F)$ is a vector subspace of the space of all 
maps from $ E $ to $ F $.
\item[ii] The evaluation map 
\begin{equation*}
\begin{array}{cccc}
\mathcal{L}_{d,g}(E,F) \times E \longrightarrow F, \\
(L,x) \longrightarrow L.x
\end{array}
\end{equation*}
is bilinear continuous.
\item[iii] If $ (G,h) $ is another Fr\'{e}chet space with standard metric, then the composition map 
\begin{equation*} 
\begin{array}{cccc}
\mathcal{L}_{d,g}(F,G) \times \mathcal{L}_{g,h}(E,F) \longrightarrow \mathcal{L}_{d,h}(E,G), \\
(L,H) \longrightarrow L \circ H
\end{array}
\end{equation*}
is bilinear continuous.
\item[iv] The Metric \eqref{metric} is complete and has absolutely convex balls.
\item[v] The group of automorphisms, $ \Aut(E)$, is open in $ \mathcal{L}_d(E)$ with respect to the topology induced by the Metric~\eqref{metric}. And the inversion map
$ \Aut(E) \rightarrow \Aut(E),\, A \rightarrow A^{-1} $ is continuous.
\end{description}
\end{prop}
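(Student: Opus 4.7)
The five parts will be handled in the order given, always leveraging Remark~\ref{p1} and the following warning: although $\|\cdot\|_{d,g}$ is subadditive and translation-compatible, it is not homogeneous, so arguments that would be one-line in the Banach setting must be routed through the absolute convexity of the $g$-balls. That reroute is, in fact, the main obstacle of the proof.

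For (i), additivity is Remark~\ref{p1}(iii), and closure under scalar multiplication follows by iterating the triangle inequality in $g$ to get $\|ny\|_g\le n\|y\|_g$ for $n\in\mathbb{N}$ (translation invariance of $g$) and combining with $\|\lambda y\|_g\le\|y\|_g$ for $|\lambda|\le 1$ (absolute convexity of $g$-balls). Bilinearity in (ii) and (iii) is formal; joint continuity in each case comes from the telescoping identities
\[
L.x - L_0.x_0 = L.(x-x_0) + (L-L_0).x_0,\qquad L\circ H - L_0\circ H_0 = L\circ(H-H_0) + (L-L_0)\circ H_0,
\]
together with Remark~\ref{p1}(i)--(ii); the only point to check is that $\|L\|_{d,g}$ remains bounded near $L_0$, which follows from $\|L\|_{d,g}\le\|L-L_0\|_{d,g}+\|L_0\|_{d,g}$.

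For (iv), a $D_{d,g}$-Cauchy sequence $(L_n)$ is sent by Remark~\ref{p1}(i) to a $g$-Cauchy sequence $L_n.x$ for each $x\in E$; completeness of $F$ supplies a linear pointwise limit $L$, and the estimate $\|(L_n-L_m).x\|_g\le\|L_n-L_m\|_{d,g}\|x\|_d$ passes to the limit in $m$ to give $L\in\mathcal{L}_{d,g}(E,F)$ and $L_n\to L$ in $D_{d,g}$. For the absolute convexity of the balls in $\mathcal{L}_{d,g}(E,F)$, I will use the pointwise identity $(\lambda L+\mu H).x = \lambda(L.x)+\mu(H.x)$, which places this vector inside the absolutely convex $g$-ball of radius $r\|x\|_d$ whenever $|\lambda|+|\mu|\le 1$; a preliminary shrinking $r\mapsto r'$ with $\|L\|_{d,g},\|H\|_{d,g}<r'<r$ then upgrades the resulting non-strict estimate to the strict one needed for the open ball.

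Finally (v) is the standard Neumann-series argument, now legal because of (iv). Remark~\ref{p1}(ii) yields $\|T^n\|_d\le\|T\|_d^n$, so whenever $\|T\|_d<1$ the partial sums of $\sum_n(-T)^n$ are $D_d$-Cauchy and converge to an inverse of $I+T$. For $A\in\Aut(E)$ and any $B$ with $\|A-B\|_d<1/\|A^{-1}\|_d$, the factorisation $B=A\bigl(I-A^{-1}(A-B)\bigr)$ forces $B\in\Aut(E)$, proving openness; continuity of inversion is extracted from $B^{-1}-A^{-1}=B^{-1}(A-B)A^{-1}$ together with the Neumann bound $\|B^{-1}\|_d\le\|A^{-1}\|_d/\bigl(1-\|A^{-1}\|_d\|A-B\|_d\bigr)$. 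As foreshadowed, the most delicate step of the entire proof is the strict-inequality refinement for the absolute convexity of balls in (iv), which is precisely where the missing homogeneity of $\|\cdot\|_{d,g}$ is felt most acutely.
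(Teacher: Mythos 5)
The paper does not prove this proposition---it is quoted verbatim from Gl\"{o}ckner's preprint---so there is no in-paper argument to compare against; your proof is correct and follows the standard route one would expect that source to take. In particular, your substitutes for the missing homogeneity of $\Vert\cdot\Vert_{d,g}$ (subadditivity plus balancedness of the $g$-balls for scalar closure in (i), and the closed absolutely convex ball of radius $r'\Vert x\Vert_d$ for the strict inequality in (iv)) are exactly what is needed, and the telescoping estimates, pointwise-limit completeness argument, and Neumann-series treatment of $\Aut(E)$ all go through as written.
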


\begin{prop} \label{open}
Let $ (E,d) $ and $ (F,g) $ be Fr\'{e}chet spaces, and $ g $ a standard metric. The set of isomorphisms of
$ E $ to $ F $, $ \Iso{(E,F)}, $ is open in $ \mathcal{L}_{d,g}(E,F) $ with respect to the topology induced by the Metric \eqref{metric}.
\end{prop}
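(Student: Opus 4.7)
My plan is to reduce the statement to proposition \ref{p2}(v) by pre-composing with a fixed inverse. Fix $T \in \Iso(E,F)$. By the natural definition of isomorphism in this category, $T^{-1} \in \mathcal{L}_{g,d}(F,E)$, and $T^{-1} \circ T = I_E \in \Aut(E)$. I would then consider the map
\begin{equation*}
\Phi : \mathcal{L}_{d,g}(E,F) \longrightarrow \mathcal{L}_d(E), \qquad L \longmapsto T^{-1} \circ L.
\end{equation*}
By proposition \ref{p2}(iii), since $d$ is our fixed metric on $E$ and $g$ is a standard metric on $F$, the composition $(H,L) \mapsto H \circ L$ is bilinear continuous, so $\Phi$ is continuous (it is the partial application at the fixed first argument $T^{-1}$).

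Next, since $\Phi(T) = I_E \in \Aut(E)$ and $\Aut(E)$ is open in $\mathcal{L}_d(E)$ by proposition \ref{p2}(v), the preimage $U := \Phi^{-1}(\Aut(E))$ is an open neighbourhood of $T$ in $\mathcal{L}_{d,g}(E,F)$. The goal is to show $U \subset \Iso(E,F)$. For $L \in U$, write $A := T^{-1} \circ L \in \Aut(E)$, so that $L = T \circ A$. Since $T$ is an isomorphism and $A$ is an automorphism of $E$, $L$ is bijective; moreover $L^{-1} = A^{-1} \circ T^{-1}$, which lies in $\mathcal{L}_{g,d}(F,E)$ as a composition of Lipschitz maps (using remark \ref{p1}(ii), where we have $T^{-1} \in \mathcal{L}_{g,d}(F,E)$ and $A^{-1} \in \mathcal{L}_d(E)$ by openness of $\Aut(E)$). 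Hence $L \in \Iso(E,F)$, proving openness.

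The only subtle point, and what I would flag as the main thing to check, is the standing convention that for $T \in \Iso(E,F)$ the inverse $T^{-1}$ automatically lies in $\mathcal{L}_{g,d}(F,E)$; this is the analogue in the Lipschitz–Fréchet category of the open mapping theorem for Banach isomorphisms. Assuming this is built into the definition of $\Iso$ used in the paper (which is the standard convention, and is implicit in the symmetric role of $E$ and $F$), the argument above is essentially a two-line reduction to proposition \ref{p2}(v). Everything else is formal: continuity of the shifted composition, openness of $\Aut(E)$, and closure of Lipschitz maps under composition by remark \ref{p1}(ii).
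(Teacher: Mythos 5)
Your proof is correct and is essentially the paper's argument: both reduce openness of $\Iso(E,F)$ to openness of $\Aut(E)$ (Proposition~\ref{p2}(v)) by composing with a fixed isomorphism and using bilinear continuity of composition (Proposition~\ref{p2}(iii)). The only difference is cosmetic and in your favour: the paper pushes $\Aut(E)$ forward along $k \mapsto i \circ k$ and invokes the open mapping theorem to conclude that this map is a homeomorphism, whereas you pull $\Aut(E)$ back along the continuous map $L \mapsto T^{-1}\circ L$, which needs no open mapping theorem; and the point you flag --- that $T^{-1} \in \mathcal{L}_{g,d}(F,E)$ for $T \in \Iso(E,F)$ --- is equally implicit in the paper's claim that the image of $\Aut(E)$ under $i^{*}$ is all of $\Iso(E,F)$.
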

\begin{proof}
Fix an isomorphism $ i: E \rightarrow F $. Define a map 
\begin{equation*}
\begin{array}{cccc}
i^* : \mathcal{L}_d(E)\longrightarrow \mathcal{L}_{d,g}(E,F), \\
i^* = i \circ k, \; k \in \mathcal{L}_d(E)
\end{array}
\end{equation*}
$ i^* $ is bijective because $ i $ is isomorphism. The composition $ i \circ k $ is 
bilinear continuous by Proposition \ref{p2} $(iii)$. Thus, $ i^* $ is homeomorphism by virtue of the open mapping theorem. 
Since the group of automorphisms of $ E ,\, \Aut ({E}) $, is open in $\mathcal{L}_d(E)$ by Proposition \ref{open} $(v)$
it follows that its image under $ i^* $, which is the set of isomorphisms $ \Iso{(E,F)} $, is open in $ \mathcal{L}_{d,g}(E,F) $. 
\end{proof}
\subsection{Differentiation and $MC^k-$ maps }
\begin{defn}
Let $ E,F $ be Fr\'{e}chet spaces, $ U $ an open subset of $ E $, and $ P:U \rightarrow F $
 a continuous map. Let $CL(E,F)$ be the space of all continuous linear maps from $E$ to $F$ topologized by the compact-open topology. 
 We say $ P $ is differentiable at the point $ p \in U$ if there exists a linear map
$\operatorname{d}P(p): E \rightarrow F$ with $$\operatorname{d}P(p)h =  \lim_{t\rightarrow 0}\tfrac{P(p+th)-P(p)}{t}$$, for all $ h \in E $. 
If $P$ is differentiable at all points $p \in U$, if $\operatorname{d}P(p) : U \rightarrow CL(E,F)$ is continuous for all
$p \in U$ and if the induced map $ P': U \times E \rightarrow F,\,(u,h) \mapsto \operatorname{d}P(u)h $
 is continuous in the product topology, then we say that $ P $ is Keller-differentiable.
  We define $ P^{(k+1)}: U \times E^{k+1} \rightarrow F $ inductively by
\begin{equation*}
P^{(k+1)}(u,f_{1},...,f_{k+1}) = \lim_{t\rightarrow 0}\dfrac{P^{(k)}(u+tf_{k+1})(f_1,...,f_k)- P^{(k)}(u)(f_1,...,f_k)}{t}.
\end{equation*}
\end{defn}
\begin{defn} \label{difr}
If $P$ is Keller-differentiable, $ \operatorname{d}P(p) \in \mathcal{L}_{d,g}(E,F) $ for all $ p \in U $, and the induced map 
$ \operatorname{d}P(p) : U \rightarrow \mathcal{L}_{d,g}(E,F)  $ is continuous, then $ P $ is called b-differentiable. 

We say $ P $ is $ MC^{0} $ and write $ P^0 = P $ if it is continuous. 
We say $P$ is an $ MC^{1} $ and write  $P^{(1)} = P' $ if it is b-differentiable. Let $ \mathcal{L}_{d,g}(E,F)_0 $ be 
the connected component of $ \mathcal{L}_{d,g}(E,F) $ containing the zero map. If $ P $ is b-differentiable and if 
 $V \subseteq U$ is a connected open neighborhood of $x_0 \in U$, then $P'(V)$ is connected and hence contained in the connected component
$P'(x_0) +  \mathcal{L}_{d,g}(E,F)_0 $ of $P'(x_0)$ in $\mathcal{L}_{d,g}(E,F)$. Thus, $P'\mid_V - P'(x_0):V \rightarrow \mathcal{L}_{d,g}(E,F)_0 $
 is again a map between subsets of Fr\'{e}chet spaces. This enables a recursive definition: If $P$ is $MC^1$ and $V$ can be chosen for each
 $x_0 \in U$ such that $P'\mid_V - P'(x_0):V \rightarrow \mathcal{L}_{d,g}(E,F)_0 $ is $ MC^{k-1} $, 
then $ P $ is called an $ MC^k$-map. We make a piecewise definition of $P^{(k)}$ by $ P^{(k)}\mid_V \coloneq \left(P'\mid_V - P'(x_0)\right)^{(k-1)} $
for $x_0$ and $V$ as before. 
The map $ P $ is $ MC^{\infty} $ if it is $ MC^k $ for all $ k \in \mathbb{N}_0 $.
\end{defn}
We shall denote by $\operatorname{D},\operatorname{D^2}$  the first and the second differential, respectively.
We should mention that an appropriate version of the  Chain rule is available and the composition of composable $ MC^k$- maps is again $ MC^k $.
A \textbf{bounded Fr\'{e}chet manifold} is a Hausdorff Second countable topological space with an atlas of coordinate 
charts taking their values in Fr\'{e}chet spaces such that the coordinate transition functions are all $MC^{\infty}$.

Suppose $ M $ is a bounded Fr\'{e}chet manifold. Naturally, we define a bounded Fr\'{e}chet vector bundle 
over $ M $, this is a Fr\'{e}chet vector bundle whose total space is a bounded Fr\'{e}chet manifold. The    
tangent bundle $ TM $ is a bounded Fr\'{e}chet vector bundle over $ M $ whose coordinate transition 
functions are just the tangents $ TP $ of the coordinate transition functions $ P $ for M.

\begin{defn}
A map $ P: M \rightarrow N $ is an $ MC^{k} $ of bounded Fr\'{e}chet manifolds if we can find charts around
 any point in $ M $ and its image in $ N $ such that the local representative of $ P $ in these charts is an
 $ MC^{k}$- map of Fr\'{e}chet spaces. It is called $ MC^{\infty} $ if the local representative in the charts
 is $ MC^{\infty} $.
\end{defn}
For $ k \geq 1 $, an $  MC^k$- map $ P: M \rightarrow N $ of bounded Fr\'{e}chet manifolds induces a tangent
 map $ TP:TM \rightarrow TN $ of their tangent bundles which takes the fibre over $ f \in M $ into the fibre 
over $ P(f) \in N $ and is linear on each fibre. The local representatives for the tangent map $ TP $ are just 
the tangents of the local representatives for $ P $. The \textbf{derivative} of $ P $ at $ f $ is the linear
 map
\begin{equation*}
\operatorname{D}P(f): TM_f  \longrightarrow TN_{P(f)}
\end{equation*} 
induced by $ TP $ on the tangent spaces. When the manifolds are Fr\'{e}chet spaces this agrees with 
Definition ~\ref{difr}.
\section{Lipschitz Fredholm maps and stability}\label{lp}

\begin{defn}
Let $ (E,d) $ and $ (F,g) $ be Fr\'{e}chet spaces, and  $ g $ a standard metric. 
A map $ \varphi \in \mathcal{L}_{d,g}(E,F) $ is called Lipschitz-Fredholm operator if it satisfies 
the following conditions:
\begin{enumerate}
\item The image of $ \varphi $ is closed.
\item The dimension of the kernel of $ \varphi$ is finite.
\item The co-dimension of the image of $ \varphi $ is finite.
\end{enumerate}
\end{defn}
We denote by $ \mathcal{LF}(E,F) $ the set of all Lipschitz-Fredholm operators from $ E $ into
$ F $. For $ \varphi \in \mathcal{LF}(E,F) $ we define the \textbf{index} of $ \varphi $ to be
\begin{equation*}
\Ind \varphi = \dim \ker \varphi - \codim \Img \varphi.
\end{equation*}
A subset $ G $ of a Fr\'{e}chet space $ F $ is called topologically complemented or split in $ F $, if $ F $ is homeomorphic to the topological direct sum $ G \oplus H $, where $ H $ is a subspace of $ F $. We call $ H $ a topological complement of $ G $ in $ F $.
\begin{theorem}[\cite {muller}, Theorem 3.14] \label{comp2}
Let $ F $ be a Fr\'{e}chet space. Then
\begin{description}
\item[i] Every finite-dimensional subspace of $ F $ is closed.
\item[ii] Every closed subspace $ G \subset F $ with $ \codim(G)=\dim (F/G)< \infty $ is topologically complemented in $ F $.
\item[iii] Every finite-dimensional subspace of $ F $ is topologically complemented.
\item [iv]Every linear isomorphism between the direct sum of two closed subspaces and $ F $, $ G \oplus H \rightarrow F $, is a homeomorphism.

\end{description}
\end{theorem}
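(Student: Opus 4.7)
The plan is to prove the four parts in sequence, with (i) and (iv) feeding into (ii) and (iii). Each assertion reduces to a classical result of locally convex analysis that is available in the Fr\'{e}chet setting, so no genuinely new machinery is needed beyond what holds in every Hausdorff locally convex space.

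For (i), I would show that any finite-dimensional Hausdorff topological vector space is linearly homeomorphic to $\mathbb{R}^n$ (or $\mathbb{C}^n$). The proof proceeds by induction on dimension; the essential input is that continuous linear functionals separate points, which follows from the Hahn--Banach theorem in a locally convex Hausdorff space. Since $\mathbb{R}^n$ is complete, any finite-dimensional subspace $H$ of a Fr\'{e}chet space $F$ is complete in its subspace topology, hence closed in $F$. For (iv), I would invoke the Banach--Schauder open mapping theorem for Fr\'{e}chet spaces: if $G$ and $H$ are closed subspaces of $F$, the product $G \times H$ is again Fr\'{e}chet (complete and metrizable), and any continuous linear bijection from it onto $F$ is automatically open, hence a homeomorphism.

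For (ii), given a closed $G \subset F$ of finite codimension $n$, I would pick a basis $\bar v_1,\dots,\bar v_n$ of the finite-dimensional quotient $F/G$ and lift it to $v_1,\dots,v_n \in F$; set $H = \operatorname{span}(v_1,\dots,v_n)$. Then $F = G \oplus H$ algebraically, $H$ is closed by (i), and the natural map $G \oplus H \to F$ is a continuous linear bijection, hence a homeomorphism by (iv). For (iii), given a finite-dimensional $H$ with basis $v_1,\dots,v_n$, I would use Hahn--Banach (available in any Fr\'{e}chet space, which is locally convex Hausdorff) to produce continuous linear functionals $\lambda_1,\dots,\lambda_n$ on $F$ with $\lambda_i(v_j) = \delta_{ij}$. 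Setting $G = \bigcap_{i=1}^n \ker \lambda_i$ gives a closed subspace with $F = G \oplus H$ algebraically, and (iv) promotes this to a topological direct sum.

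The main subtlety I expect lies in step (i) and in the Hahn--Banach application in (iii): both ultimately rest on having enough continuous linear functionals to separate points of a finite-dimensional subspace from the rest of $F$. Once these are in hand, the decomposition arguments and the open mapping theorem run mechanically. One should also verify along the way that the direct sum topology on $G \oplus H$ used in (iv) genuinely yields a Fr\'{e}chet space, but this is immediate from completeness and closedness of the two summands, together with the fact that the product of two Fr\'{e}chet spaces is Fr\'{e}chet.
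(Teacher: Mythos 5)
The paper does not prove this statement at all: it is quoted verbatim from M\"{u}ller (\cite{muller}, Theorem 3.14) and used as a black box, so there is no proof of record to compare against. Your argument is a correct and standard derivation of all four parts: (i) via the fact that a finite-dimensional Hausdorff topological vector space is linearly homeomorphic to $\mathbb{R}^n$ and hence complete; (iv) via the open mapping theorem applied to the continuous linear bijection $G\times H\to F$ between Fr\'{e}chet spaces; (ii) by lifting a basis of $F/G$ and invoking (i) and (iv); (iii) by extending the coordinate functionals of $H$ with Hahn--Banach and taking $G=\bigcap_i\ker\lambda_i$. Two minor remarks: for (i) the Hahn--Banach theorem is not actually needed --- the classical Tychonoff argument (compactness of the unit sphere in $\mathbb{R}^n$, or the induction using ``a linear functional is continuous iff its kernel is closed'') works in any Hausdorff topological vector space, though your locally convex route is perfectly valid for Fr\'{e}chet spaces; and in (iii) the continuity of the projection $x\mapsto\sum_i\lambda_i(x)v_i$ already gives the topological splitting directly, so the appeal to (iv) there is optional. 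No gaps.
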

\begin{theorem}
$ \mathcal{LF}(E,F) $ is open in $\mathcal{L}_{d,g}(E,F)$ with respect to the topology defined by the Metric
 \eqref{metric}. Furthermore, the function $ T \rightarrow \Ind T $ is continuous on $ \mathcal{LF}(E,F) $,
 hence constant on connected components of $ \mathcal{LF}(E,F) $.
\end{theorem}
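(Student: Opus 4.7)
The plan is to mimic the classical Banach-space proof of Fredholm stability: after ``dividing out'' $\ker T$ and adding back a finite-dimensional complement of $\Img T$, the operator $T$ becomes a topological isomorphism; small perturbations remain isomorphisms by Proposition~\ref{open}, and the dimensions of the kernel and cokernel of the perturbed operator can be read off from a single auxiliary finite-rank map defined on $K = \ker T$.

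Fix $T \in \mathcal{LF}(E,F)$ with $n := \dim \ker T$ and $m := \codim \Img T$. Setting $K := \ker T$ and $R := \Img T$, I invoke Theorem~\ref{comp2}(ii),(iii) to obtain topological direct-sum decompositions $E = K \oplus E_{1}$ and $F = R \oplus C$ with $\dim C = m$, noting by Theorem~\ref{comp2}(iv) that the associated projections are continuous linear and the induced metrics on the closed subspaces $E_{1}$ and $C$ are standard. I then introduce the auxiliary map
\begin{equation*}
\Phi_S : E_{1} \times C \longrightarrow F, \qquad \Phi_S(e, c) := S(e) + c,
\end{equation*}
which depends linearly on $S \in \mathcal{L}_{d,g}(E,F)$. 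Because $T\vert_{E_{1}} : E_{1} \to R$ is a continuous linear bijection, $\Phi_T$ is a continuous linear bijection onto $R \oplus C = F$, hence a topological isomorphism by Theorem~\ref{comp2}(iv).

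The assignment $S \mapsto \Phi_S$ is Lipschitz from $\mathcal{L}_{d,g}(E,F)$ into $\mathcal{L}_{d',g}(E_{1} \times C, F)$, where $d'$ is the product of the induced standard metrics on $E_{1}$ and $C$; hence by Proposition~\ref{open} there is a neighbourhood $U$ of $T$ such that $\Phi_S \in \Iso(E_{1} \times C, F)$ for every $S \in U$. For such an $S$ I decompose $\Phi_S^{-1} = (\alpha_S, \beta_S)$ with $\alpha_S : F \to E_{1}$ and $\beta_S : F \to C$ continuous linear. Using $S(e) = \Phi_S(e, 0)$, a short direct calculation gives $f \in \Img S$ if and only if $\beta_S(f) \in \beta_S(S(K))$, and $(k, e) \in K \oplus E_{1} = E$ lies in $\ker S$ if and only if $\beta_S(S(k)) = 0$ and $e = -\alpha_S(S(k))$. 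Letting $r := \dim \beta_S(S(K))$ (finite, since $\dim K = n$), these identifications yield $\dim \ker S = n - r$ and $\Img S = \beta_S^{-1}(\beta_S(S(K)))$, the preimage of a finite-dimensional (hence closed) subspace of $C$. Therefore $\Img S$ is closed, $\codim \Img S = m - r$, and $\Ind S = (n - r) - (m - r) = n - m = \Ind T$. Thus $U \subset \mathcal{LF}(E,F)$ and $\Ind$ is constant on $U$; being locally constant and integer valued, it is continuous.

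The main technical obstacle is bookkeeping in the Lipschitz category: one must verify that the splitting projections, the restriction $T\vert_{E_{1}}$, the assembled map $\Phi_T$, and the perturbation assignment $S \mapsto \Phi_S$ all lie in the appropriate $\mathcal{L}_{\cdot,\cdot}$ spaces so that Propositions~\ref{p2} and~\ref{open} genuinely apply. This is precisely where the bounded Fr\'echet hypothesis and the standardness of $g$ are essential, since, as noted in the Introduction, on general Fr\'echet spaces the usual operator-norm topologies on $\mathcal{CL}(E,F)$ are not well behaved.
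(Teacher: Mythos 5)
Your proof is correct and follows essentially the same route as the paper: split $E = \ker T \oplus E_{1}$ and $F = \Img T \oplus C$, perturb the isomorphism $(e,c)\mapsto T(e)+c$ using the openness of $\Iso(E_{1}\oplus C, F)$ from Proposition~\ref{open}, and then read off the kernel and cokernel of nearby $S$. Your endgame via the components $(\alpha_S,\beta_S)$ of $\Phi_S^{-1}$ and the single finite-rank map $\beta_S\circ S\vert_{K}:K\to C$ is tidier than the paper's projection-plus-quotient-diagram bookkeeping, and, unlike the paper, it makes the closedness of $\Img S$ explicit (as the preimage under $\beta_S$ of a closed finite-dimensional subspace of $C$), which is actually required by the definition of Lipschitz-Fredholm.
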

\begin{proof}
Suppose $\varphi : E \rightarrow F$ is a Lipschitz-Fredholm operator. We have to find a neighbourhood $ N $ of
 $ \varphi $ in $ \mathcal{L}_{d,g}(E,F) $ such that $ N \subset \mathcal{LF}(E,F) $. We can write 
$ E = \ker \varphi \oplus G $, where $ G $ is a topological complement of $ \ker \varphi $ by Theorem 
\ref{comp2} $(iii)$. $ \varphi $ induces linear isomorphism of $ G $ into its image $ \varphi (G) $ 
by virtue of the open mapping theorem, thus we can write $ F = \varphi (G) \oplus H $ for some finite
 dimensional subspace $ H $ in $ F $. The map
\begin{equation*}
\begin{array}{cccc}
\overline{\varphi}:G\oplus H \longrightarrow \varphi(G) \oplus H = F \\
\overline{\varphi}(x,y) \mapsto \varphi .x +y
\end{array}
\end{equation*}
is a linear isomorphism. But the set of linear isomorphisms is open in the space of linear Lipschitz maps, see Proposition ~\ref{open}. Now assume $ \psi \in \mathcal{L}_{d,g}(E,F)$
is in an open neighbourhood of $ \varphi $, say $ N $, it is constructed as follows, consider the space of
linear maps $\mathcal{L}(G\oplus H, F)$. Define the map $\alpha$ 
\begin{equation*}
\begin{array}{cccc}
\alpha : \mathcal{L}_{d,g}(E,F) \rightarrow \mathcal{L}(G\oplus H, F) \\
(\alpha \circ \psi)(x,y)= \psi .x+y
\end{array}
\end{equation*}
Put $N = \alpha^{-1}(\Iso (G\oplus H, F))$.
The map
 \begin{equation*}
\begin{array}{cccc}
\overline{\psi} : G\oplus H \longrightarrow  F \\
\overline{\psi}(x,y) \mapsto \psi .x+y
\end{array}
\end{equation*}
is therefore, a linear isomorphism. It follows that $\dim \ker \psi \leq \dim \ker \varphi $. 
Indeed, consider the projection map $ \gamma : E = G \oplus \ker \varphi \rightarrow \ker \varphi$. 
Since $\overline{\psi}$ is isomorphism it follows that $ G \cap \ker \psi = 0 $. But $ G = \ker \gamma $, 
whence $ \ker (\gamma \vert_{ \ker \psi}) = \ker \gamma \cap \ker \psi = G \cap \ker \psi = 0 $. 
Thus $ \gamma \vert_{ \ker \psi} : \ker \psi \rightarrow  \ker \psi$ is a monomorphism, whence 
$ \dim \ker \psi < \dim \ker \varphi < \infty $. Now let us show that
 $ \codim \Img (\psi) = \dim \frac{F}{\psi(E)} < \infty $ . Let $ K $ be the complement of 
$ \gamma (\ker \psi) $ in $ \ker \varphi $. Then $ \dim K < \infty $, and there exists an isomorphism 
$ E \cong G \oplus K \oplus \ker \psi$. 
Notice that $ \psi \vert _G = \overline{\psi} \vert_{ (G \oplus \, \{ 0 \}) }$ 
and we have natural identifications: 
\begin{equation*}
 \dfrac{F}{\psi(G)} = \dfrac{F}{\overline{\psi}(G \oplus \{ 0 \})} \overset{\iota} \cong H  \;\; and \;\;
 \dfrac{F}{\psi(G \oplus K)} \overset {\kappa}\cong \dfrac{F}{\psi{(E)}}.
\end{equation*}
Then we have the following commutative diagram:
\begin{equation*}
\xymatrix{
\frac{F}{\psi(G)} \ar[d]^{\xi} \ar[r]^{\iota} &H\ar[d]^{\zeta}\\
\frac{F}{\psi(G \oplus K)} \ar[r]^{\kappa}
& \frac{F}{\psi(E)}}
\end{equation*}
Since $ \zeta $ is onto and $ H $ has a finite dimension, we see that 
$ \dim \frac{F}{\psi(E)} < \infty $. So $\psi$ is Lipschitz Fredholm.\\
Evidently,
\begin{equation*}
 \dim \ker \xi = \dim \ker \psi = \dim K  \:\:\: and \:\:\: \codim \Img \psi = \dim H - \dim K 
\end{equation*}
Furthermore, 
\begin{equation*}
\dim \ker \varphi = \dim K + \dim \ker \psi \;\:\: and \:\:\: \codim \Img \varphi = \dim H = \dim K + \dim \frac{F}{\psi(E)}
\end{equation*} 
So we have 
\begin{align*}
\Ind \psi &= \dim \ker \psi - (\dim H - \dim K) \\
&=  \dim \ker \psi + \dim K - \dim H \\
&=  \dim \ker \varphi - \dim H \\
&= \Ind \varphi.
\end{align*}
\end{proof}
In the sequel, we assume that manifolds are connected.
\begin{defn}
Let $ M $ and $ N $ be bounded Fr\'{e}chet manifolds modelled 
on  Fr\'{e}chet spaces $ E $ and $ F $ with standard metrics. 
A Lipschitz-Fredholm map is an $ MC^1$- map $ f:M \rightarrow N $ such that for each $ x \in M $, 
the derivative $ \operatorname{D}P(x): TM_f  \longrightarrow TN_{P(x)} $ is a Lipschitz-Fredholm operator. 
The index of $ f$, denoted by $\Ind{f}$, is defined to be the index of $ \operatorname{D}P(x) $ for some $ x $. Since $ f $ is $ MC^1 $ and $ M $ is connected by Theorem \ref{open} the definition does not depend on the choice of $ x $. 
\end{defn}
\section{Sard's Theorem}\label{sar}
Let $ M $ and $ N $ be bounded Fr\'{e}chet manifolds modelled
 on Fr\'{e}chet spaces $ E $ and $ F $ with standard metrics. Let $ f:M \rightarrow N$ be any $ MC^1 $ map.
 A point $ x \in M $ is called a \textbf{regular point} of $ f $ if 
$ \operatorname{D}P(x): TM_f  \longrightarrow TN_{P(x)} $ is surjective, otherwise is called 
\textbf{critical}. The images of the critical points under $ f $ are called the \textbf{the critical values}
 and their complement the \textbf{regular values}. Note that if $ y \in N $ is not in image of $ f $ it is a 
regular value. We denote the set of critical points of $ f $ by $ \mathcal{C}_f $ and the set of regular values
 by $ \mathcal{R}(f) $ or $ \mathcal{R}_f $. In addition, for $ A \subset M $ we define 
$ \mathcal{R}_ {f \vert A }$ by $ \mathcal{R}_f \vert A = N \setminus f( \mathcal{C}_f \cap A )$. 
In particular, if $ U \subset M $ is open, $ \mathcal{R}_f \vert U = \mathcal{R}(f \vert U) $. 
\begin{theorem}[\cite{glockner}, Proposition 7.1. Inverse Function Theorem for $ MC^k$- maps] \label{invr}
Let $ (E,g) $ be a Fr\'{e}chet space with standard metric $g$. Let $U \subset E$ be open, $x_0 \in U$ and
$ f : U \subset E \rightarrow E $  an $ MC^k$-map, $ k \geq 1 $.
 If $f'(x_0) \in \Aut{(E)}  $, then there exists an open neighborhood
 $ V \subseteq U $ of $ x_0 $ such that $ f(V) $ is open in $ E $ and $ f\vert_V : V \rightarrow f(V) $ 
is an $ MC^k$- diffeomorphism.
\end{theorem}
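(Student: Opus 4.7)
The plan is to import the classical Banach-space inverse function theorem proof into the $MC^k$ category, using Proposition \ref{p2} at precisely those places where generic Fr\'echet structure would obstruct the argument. First I would normalize: precomposing $f$ on the left with $f'(x_0)^{-1} \in \Aut(F)$ and translating domain and range, I may assume $x_0 = 0$, $f(0) = 0$, and $f'(0) = \mathrm{id}_F$. Setting $g(x) := x - f(x)$, we have $g(0) = 0$, $g'(0) = 0$, and solving $f(x) = y$ becomes the fixed-point equation $x = \Phi_y(x) := y + g(x)$.

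Next, I would locate a contraction neighborhood. Because $g$ is $b$-differentiable, $x \mapsto g'(x)$ is continuous into $\mathcal{L}_d(F)$, whose balls are absolutely convex by Proposition \ref{p2}(iv); since $g'(0) = 0$, I can choose $r > 0$ with $\parallel g'(x) \parallel_d \le \tfrac12$ on $\overline{B}^d_r(0) \subseteq U$. The mean value theorem for $MC^1$-maps then yields $\parallel g(x_1) - g(x_2) \parallel_d \le \tfrac12 \parallel x_1 - x_2 \parallel_d$ on that ball. For $y \in B^d_{r/2}(0)$, the map $\Phi_y$ accordingly sends $\overline{B}^d_r(0)$ into itself and is a $\tfrac12$-contraction, so Banach's fixed-point theorem on this complete metric subspace produces a unique preimage $x =: f^{-1}(y)$. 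Setting $V := f^{-1}(B^d_{r/2}(0))$, the restriction $f|_V : V \to B^d_{r/2}(0)$ is a Lipschitz homeomorphism onto an open set.

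Finally, to upgrade $f^{-1}$ to class $MC^k$, I would argue via the candidate formula $(f^{-1})'(y) = f'(f^{-1}(y))^{-1}$. Shrinking $V$ if necessary so that $f'(V) \subseteq \Aut(F)$ (using openness of $\Aut(F)$ in $\mathcal{L}_d(F)$, Proposition \ref{p2}(v)), this formula makes sense and depends continuously on $y$ into $\mathcal{L}_d(F)$: continuity of $f^{-1}$ and of $f'$ combines with continuity of the inversion map on $\Aut(F)$ (again Proposition \ref{p2}(v)) and with bilinear continuity of composition (Proposition \ref{p2}(iii)). A direct estimate identifies this limit with the actual $b$-derivative of $f^{-1}$, giving $MC^1$; an induction on $k$ using the chain rule for $MC^k$-maps then yields the full statement. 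The main obstacle I expect is precisely this last step: verifying that $(f^{-1})'$ lies in $\mathcal{L}_{d,d}(F,F)$ and varies continuously in the Lipschitz-operator topology rather than in some weaker one. This is exactly where generic Fr\'echet inverse function theorems fail, and Proposition \ref{p2} is what restores the situation in the $MC^k$ framework.
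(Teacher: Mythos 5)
This theorem is not proved in the paper at all: it is imported verbatim from \cite{glockner} (proposition 7.1), so there is no in-paper proof to compare against. Your proposal is the standard Banach fixed-point route, and it is essentially the argument given in the cited source, so it is the ``right'' proof rather than a genuinely different one. Two points deserve explicit care, both of which your sketch glosses over. First, $\parallel \cdot \parallel_d$ is not a norm: for a standard metric it is bounded and fails homogeneity, $\parallel t x \parallel_d \neq |t| \parallel x \parallel_d$. Your contraction argument survives this because it only ever uses subadditivity (translation invariance of $d$) and the operator estimate $\parallel L.x\parallel_d \leq \parallel L \parallel_d \parallel x \parallel_d$ from Remark~\ref{p1}(i), but you should say so, since a reader used to Banach spaces will silently invoke homogeneity. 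Second, the ``mean value theorem for $MC^1$-maps'' giving $\parallel g(x_1)-g(x_2)\parallel_d \leq \tfrac12 \parallel x_1 - x_2\parallel_d$ is a genuine ingredient, not a freebie: it requires writing $g(x_1)-g(x_2)$ as a weak integral of $g'$ along the segment and then using that the closed balls of the standard metric are absolutely convex (so the integral, as a limit of convex combinations, stays in the ball of radius $\sup_t \parallel g'(\cdot)(x_1-x_2)\parallel_d$); this is exactly where the restriction to standard metrics enters. Your final step, identifying $(f^{-1})'(y)$ with $f'(f^{-1}(y))^{-1}$, is also slightly more delicate here than in the Banach case because differentiability is defined via directional limits (Keller-differentiability) rather than a Fr\'{e}chet remainder estimate, though the $2$-Lipschitz bound on $f^{-1}$ that you derive is what makes the limit computation go through. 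With those caveats made explicit, the proposal is sound.
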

\begin{theorem}[Local Representation Theorem] \label{rp}
Let $ f: U \subseteq E \rightarrow F $ be an $ MC^k $, $ k \geq 1 $, $ u_0 \in U $ and suppose that 
$ \operatorname{D}f(u_o) $ has closed split image $ F_1 $ with closed topological complement $ F_2 $ 
and split kernel $ E_2 $ with closed topological complement $ E_1 $. Then there are two open sets
 $ U' \subset U \subset E = E_1 \oplus E_2 $ and
$ V \subset F_1 \oplus E_2 $ and an $ MC^k$-diffeomorphism $ \Psi : V \rightarrow U' $,
such that $ (f\circ \Psi)(u,v) = (u,\eta (u,v))$ for all $ (u,v) \in V $, where $ \eta : V \rightarrow E_2 $ 
is an $ MC^k$- map.
\end{theorem}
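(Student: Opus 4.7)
The plan is to produce the normal form by a single application of the $MC^k$ Inverse Function Theorem (Theorem \ref{invr}), following the classical Banach-space strategy. Without loss of generality I take $u_0 = 0$ and $f(u_0) = 0$, and let $\pi_1 : F = F_1 \oplus F_2 \to F_1$ and $\pi_2 : F \to F_2$ denote the continuous linear projections associated to the splitting of $F$. The goal is to straighten out the $F_1$-component of $f$ via a change of variables in the source only, the second factor $E_2$ playing the role of a free parameter.

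First I would introduce the auxiliary map
\[
\Phi : U \subseteq E_1 \oplus E_2 \longrightarrow F_1 \oplus E_2, \qquad \Phi(u_1, u_2) \;=\; \bigl(\pi_1 f(u_1, u_2),\; u_2 \bigr).
\]
Because $\pi_1$ is continuous linear (hence $MC^\infty$) and $f$ is $MC^k$, the chain rule for $MC^k$-maps mentioned after Definition \ref{difr} makes $\Phi$ an $MC^k$-map. Its derivative at the origin is $D\Phi(0)(h_1, h_2) = (T h_1, h_2)$, where $T := Df(0)\vert_{E_1} : E_1 \to F_1$. Since $E_2 = \ker Df(0)$ and $F_1 = \Img Df(0)$, the restriction $T$ is a continuous linear bijection between Fr\'{e}chet spaces, so by Theorem \ref{comp2}(iv) it is a topological isomorphism. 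Consequently $D\Phi(0) = T \oplus \mathrm{id}_{E_2}$ is a topological linear isomorphism $E \to F_1 \oplus E_2$.

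Second, to fit the format of Theorem \ref{invr}, which is stated for self-maps, I would compose with $\Theta := T^{-1} \oplus \mathrm{id}_{E_2} : F_1 \oplus E_2 \to E$. The composite $\Theta \circ \Phi : U \to E$ is $MC^k$ with $D(\Theta \circ \Phi)(0) = \mathrm{id}_E \in \Aut(E)$, so Theorem \ref{invr} supplies an open neighborhood $U'$ of $0$ in $U$ on which $\Theta \circ \Phi$ is an $MC^k$-diffeomorphism onto an open set of $E$. It follows that $\Phi\vert_{U'}$ is itself an $MC^k$-diffeomorphism onto $V := \Phi(U') \subseteq F_1 \oplus E_2$, with $MC^k$-inverse $\Psi := (\Phi\vert_{U'})^{-1} : V \to U'$. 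Setting $\eta(u, v) := \pi_2 f(\Psi(u, v))$, which is $MC^k$ as a composition of $MC^k$-maps, the identity $\Phi \circ \Psi = \mathrm{id}_V$ forces $\pi_1 f(\Psi(u, v)) = u$, and one reads off exactly $(f \circ \Psi)(u, v) = (u, \eta(u, v))$.

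The main obstacle, I expect, is the verification that every auxiliary linear map (the projections $\pi_i$, the isomorphism $T$, and $\Theta = T^{-1} \oplus \mathrm{id}_{E_2}$) genuinely belongs to the Lipschitz category $\mathcal{L}_{d,g}$ in which the $MC^k$-calculus and Theorem \ref{invr} are developed; only then does the chain rule deliver $MC^k$-regularity of $\Phi$, $\Theta \circ \Phi$, $\Psi$, and $\eta$. For the projections this is Theorem \ref{comp2}(iv); for $T$ it is the hypothesis combined with the fact that $F_1$ is a closed split subspace; and for $T^{-1}$ it follows from $T \in \Iso(E_1, F_1)$ via Proposition \ref{open}, which in this category guarantees a Lipschitz inverse. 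Once this bookkeeping is in place, the rest of the argument is purely formal.
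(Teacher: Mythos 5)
Your proposal is correct and follows essentially the same route as the paper: both construct the auxiliary map $g(u_1,u_2)=(\pi_1 f(u_1,u_2),u_2)$, show its derivative at $u_0$ is an isomorphism because $Df(u_0)$ vanishes on $E_2$ and restricts to an isomorphism $E_1\to F_1$, invoke the $MC^k$ inverse function theorem, and set $\eta=\pi_2 f\circ\Psi$. Your extra step of composing with $T^{-1}\oplus\mathrm{id}_{E_2}$ to reduce to a self-map of $E$, and your explicit check that the projections and $T^{-1}$ lie in the Lipschitz category, are careful refinements of the same argument rather than a different approach.
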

\begin{proof}
Let $ f = f_1 \times f_2 $, where $ f_i: U \rightarrow F_i,\, i=1,2 $. By virtue of the open mapping theorem 
we have $ \operatorname {D_1}f_1(u_0) = \operatorname f_1(u_0) \vert_{E_1} \in \Iso (E_1, F_1) $.
 Define the map
\begin{equation*}
\begin{array}{cccc}
g : U \subset E_1 \oplus E_2 \rightarrow F_1 \oplus E_2 ,\\
g(u_1, u_2) = (f_1(u_1,u_2),u_2)
\end{array}
\end{equation*}
therefore,
\begin{equation*}
\operatorname{D}g(u).(e_1,e_2)=
\begin{pmatrix}
\operatorname{D_1}f_1(u) & \operatorname {D_2}f_1(u) \\
0 & I_{E_2}
\end{pmatrix}
\begin{pmatrix}
e_1 \\
e_2
\end{pmatrix}
 \end{equation*}
 for all $ u = (u_1,u_2) \in U,\, e_1 \in E_1, \, e_2 \in E_2 $. By hypothesis $ E_2 =
  \ker \operatorname {D}f(u_0) = \ker \operatorname {D}f_1(u_0) $ 
and hence $ \operatorname{D_2}f_1(u_0) = \operatorname{D_2}f_1(u_0) \vert_{E_2} = 0 $. 
Therefore, $ \operatorname {D}g(u_0) \in \Iso{(E, F_1 \oplus E_2)} $. \\
By the inverse function theorem, there are open sets $ U' $ and $ V $ and an $ MC^k$- 
diffeomorphism $ \Psi : V \rightarrow U' $ such that
 $ u_0 \in U' \subset U \subset E, \, g(u_0) \in V \subset F_1 \oplus E_2$,
 and $ \Psi ^{-1} = g \vert_{ U^{'}} $. Hence if $ (u,v) \in V $, then
 $ (u,v) = (g \circ \Psi)(u,v) = g(\Psi_1(u,v),\Psi_2(u,v))= (f_1 \circ \Psi_1(u,v),\Psi_2(u,v)),  $
 where $ \Psi = \Psi_1 \times \Psi_2 $. This shows that $ \Psi_2(v,v)=v $ and $ (f_1 \circ \Psi)(u,v) = u $.
 Define $ \eta = f_2 \circ \Psi $, then 
$ (f \circ \Psi)(u,v) = (f_1 \circ \Psi (u,v),f_2 \circ \Psi (u,v))=(u, \eta (u,v)) $.
\end{proof}
A map $ f $ between topological spaces is called \textbf{locally closed} if for any point $ x $ in 
the domain of $ f $ there exists an open neighborhood $ U $ such that 
$ f \vert_ {\overline{U}} $ is a closed map. 
\begin{lemma}
Let $ f : E \rightarrow F $ be a Lipschitz-Fredholm map between Fr\'{e}chet spaces with standard metrics. 
Then $ f $ is locally closed. 
\end{lemma}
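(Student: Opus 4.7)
The plan is to reduce to the Local Representation Theorem and exploit the finite-dimensionality of $\ker \operatorname{D}f(u_0)$; since $E$ and $F$ are metrizable, closedness may be checked sequentially. Fix $u_0 \in E$. By hypothesis $\operatorname{D}f(u_0)$ is a Lipschitz-Fredholm operator, so its kernel $E_2$ is finite-dimensional and its image $F_1$ is closed of finite codimension. Theorem~\ref{comp2} then supplies splittings $E = E_1 \oplus E_2$ and $F = F_1 \oplus F_2$ with $F_2$ finite-dimensional, and the Local Representation Theorem produces open sets $V \subset F_1 \oplus E_2$ and $U' \subset E$, an $MC^k$-diffeomorphism $\Psi : V \to U'$, and a continuous map $\eta$ such that $(f \circ \Psi)(u,v) = (u, \eta(u,v))$ on $V$.

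Write $(u_0^{\ast}, v_0^{\ast}) := \Psi^{-1}(u_0)$ and choose open sets $W_1 \ni u_0^{\ast}$ in $F_1$ and $W_2 \ni v_0^{\ast}$ in $E_2$ small enough that $\overline{W_1} \times \overline{W_2} \subset V$; the factor $\overline{W_2}$ is compact because $E_2$ is finite-dimensional. Set $V_0 := W_1 \times W_2$ and take $U := \Psi(V_0)$ as the candidate neighbourhood of $u_0$. The central intermediate assertion is that $\Psi(\overline{V_0})$ is closed in $E$, which forces $\overline{U} \subset \Psi(\overline{V_0})$. To verify it, take a sequence $x_n = \Psi(u_n, v_n) \in \Psi(V_0)$ converging in $E$ to some $x^{\ast}$. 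Continuity of $f$ forces $f(x_n) = (u_n, \eta(u_n,v_n)) \to f(x^{\ast})$, so $u_n$ converges in $F_1$ to some $u^{\ast} \in \overline{W_1}$; compactness of $\overline{W_2}$ extracts a subsequence $v_{n_k} \to v^{\ast} \in \overline{W_2}$. The limit $(u^{\ast}, v^{\ast})$ lies in $\overline{V_0} \subset V$, whence continuity of $\Psi$ gives $x_{n_k} \to \Psi(u^{\ast}, v^{\ast})$ and thus $x^{\ast} = \Psi(u^{\ast}, v^{\ast}) \in \Psi(\overline{V_0})$.

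Closedness of $f\vert_{\overline{U}}$ now follows by the same extraction. If $C \subset \overline{U} \subset \Psi(\overline{V_0})$ is closed and $f(x_n) \to y$ with $x_n = \Psi(u_n, v_n) \in C$, the argument above delivers $(u^{\ast}, v^{\ast}) \in \overline{V_0}$ along a subsequence with $x_{n_k} \to \Psi(u^{\ast}, v^{\ast}) \in C$ and $f(\Psi(u^{\ast}, v^{\ast})) = (u^{\ast}, \eta(u^{\ast}, v^{\ast})) = y$, so $y \in f(C)$. The main obstacle in the plan is the bookkeeping in the second paragraph: the subsequential limit $(u^{\ast}, v^{\ast})$ must not escape $V$, because otherwise $\Psi$ ceases to be defined and the continuity argument breaks. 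This is precisely why one shrinks to a closed box sitting strictly inside $V$ before declaring $U$, and it is also where the finite-dimensionality of $\ker \operatorname{D}f(u_0)$ is essential, since no analogous compactness is available in the $F_1$-direction.
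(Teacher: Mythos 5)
Your proof is correct and follows essentially the same route as the paper: pass to the local representation $(f\circ\Psi)(u,v)=(u,\eta(u,v))$, restrict to a box whose $E_2$-factor has compact closure, and extract a convergent subsequence in the finite-dimensional kernel direction. The one place you go beyond the paper is the explicit intermediate step that $\Psi(\overline{V_0})$ is closed in $E$ so that $\overline{U}\subset\Psi(\overline{V_0})$ — the paper works directly with a closed box in the chart and leaves this transfer back to a neighbourhood of $u_0$ in $E$ implicit — which is a welcome bit of extra care rather than a different argument.
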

\begin{proof}
Since $ f $ is Fredholm it has split image $ F_1 $ with topological complement $ F_2 $ and split kernel
 $ E_2 $ with topological complement $ E_1 $. By the local representation theorem there are two open sets
 $ U \subset E_1 \oplus E_2 $ and $ V \subset F_1 \oplus E_2  $ and an $ MC^k$- diffeomorphism
 $ \Psi : V \rightarrow U $ such that $ (f \circ \Psi)(u,v) = (u, \eta(u,v)) $ for all $ (u,v) \in V $, 
where $ \eta : V \rightarrow E_2  $ is an $ MC^k -$ map. Suppose $ U_1 \subset F_1 $ and $ U_2 \subset E_2 $ 
are open subsets and $\overline{ U_2} $ is compact. Let $ U' = U_1 \times U_2 \subset U $ so that 
$ \overline{U'} = \overline{U}_1 \times \overline{U}_2 $ and $ \overline{U'} \subset \overline{U}$. 
Suppose $ A \subset \overline{U'} $ is closed and a sequence
 $ \lbrace (y_i, z_i) = (y_i,\eta (y_i,x_i)) \rbrace \subset f(A) $ converges to 
$ (y,z) $, where $ \lbrace (y_i,x_i) \rbrace $ is a sequence in $ A $, we need to show that
 $ (y,z) \in f(A)$. By assumption we have $ \lbrace x_i \rbrace \subset \overline{U}_2 $, and since
 $ \overline{U}_2 $ is a compact subset of a finite dimensional topological vector space, we may assume
 $ x_i \rightarrow x \in \overline{U}_2 $. Then $ (y_i,x_i) \rightarrow (y,x) $. Since $ A $ is 
closed then $ (y,x) \in A $. By continuity of $ f $ we see that $ \lbrace f(y_i,x_i)=(y_i,z_i) \rbrace $ 
converges to $ f(y,x) $, but $ f(y,x) \in f(A) $ thus $ f(A) $ is closed.
\end{proof}
\begin{theorem}
Let $ M $ and $ N $ be bounded Fr\'{e}chet manifolds  modelled 
on Fr\'{e}chet spaces $ E$ and $F $ with standard metrics. Let $ f: M \rightarrow N $ be an 
$ MC^k$- Lipschitz-Fredholm map with $  k > \max \lbrace {\Ind f,0} \rbrace $. Then the set of regular
values of $ f $ is residual in $ N $.
\end{theorem}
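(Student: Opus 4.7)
The plan is to imitate Smale's Banach-space argument: reduce globally by second countability and Baire category, then handle each chart via the Local Representation Theorem together with the classical finite-dimensional Sard theorem.

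First, I would cover $M$ by countably many open sets $\{U_n\}$ (using second countability of $M$), each lying in a chart domain of $M$ and mapped by $f$ into a chart domain of $N$. Since a countable union of nowhere dense sets is meager, it suffices to show that $f(\mathcal{C}_f \cap U_n)$ is nowhere dense in $N$ for every $n$; then $f(\mathcal{C}_f)$ is meager in $N$ and $\mathcal{R}(f)$ is its residual complement.

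Next, fix one such $n$, work in charts, and pick $u_0 \in U_n$. Applying the Local Representation Theorem at $u_0$, after shrinking there is an $MC^k$-diffeomorphism $\Psi: V \to U'$ with $V \subset F_1 \oplus E_2$ such that $(f\circ\Psi)(u,v) = (u, \eta(u,v))$, where $E_2 = \ker\operatorname{D}f(u_0)$ and $F_2$ is a topological complement of $\Img\operatorname{D}f(u_0) = F_1$ in $F$, both finite-dimensional, and $\eta$ is an $MC^k$-map into $F_2$. A direct block-triangular Jacobian computation identifies the critical points of $f\circ\Psi$ with those $(u,v)$ where the partial $\operatorname{D}_2\eta(u,v): E_2 \to F_2$ is not surjective, i.e.\ where $v$ is a critical point of the restricted finite-dimensional map $\eta_u: E_2 \to F_2$. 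I would then shrink further to a product $U' = U_1 \times U_2$ with $\overline{U}_2 \subset E_2$ compact (possible because $E_2$ is finite-dimensional), and apply the local closedness lemma to arrange that $A := (f\circ\Psi)(\mathcal{C}_{f\circ\Psi} \cap \overline{U'})$ is closed in $F_1 \oplus F_2$.

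To finish the local step, show that $A$ is nowhere dense. Each slice $A \cap (\{u\}\times F_2)$ is contained in the critical values of $\eta_u|_{\overline{U}_2}$, an $MC^k$-map between finite-dimensional spaces whose difference of dimensions equals $\dim E_2 - \dim F_2 = \Ind f$; since $k > \max\{\Ind f, 0\}$, classical Sard gives each slice Lebesgue measure zero in $F_2$. A closed subset of $F_1 \oplus F_2$ all of whose $F_2$-slices have measure zero must have empty interior, because any nonempty open set contains a product $W_1 \times W_2$ with $W_2$ of positive Lebesgue measure in $F_2$. Hence $A$ is nowhere dense, and summing over $n$ completes the proof. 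I expect the main obstacle to be ensuring the slicing argument is legitimate: this requires carrying the product decomposition $F = F_1 \oplus F_2$ from the Local Representation Theorem cleanly through the chart of $N$, and it is the compactness of $\overline{U}_2$---a genuine finite-dimensional input---that upgrades the merely local closedness of $f$ into the plain closedness of $A$ needed to convert measure-zero slices into an empty topological interior.
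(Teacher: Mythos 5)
Your proposal is correct and follows essentially the same route as the paper: reduce to a countable cover by second countability and Baire category, apply the Local Representation Theorem to get the form $(u,v)\mapsto(u,\eta(u,v))$, use the block-triangular derivative to identify critical points with those of the finite-dimensional slice maps $\eta_u$, invoke classical Sard on each slice, and use the local closedness lemma (via compactness of $\overline{U}_2$ in the finite-dimensional factor) to pass from small slices to a nowhere dense critical image. The only cosmetic difference is that you phrase the slice step via measure zero and nowhere density while the paper phrases it via density of regular values and openness of $\mathcal{R}(f\vert\bar{Z})$; these are interchangeable.
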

\begin{proof}
It is enough to verify that  every $ m  \in M$ has a neighborhood $ Z $ such that
 $ \mathcal{R} (f \vert { \bar Z}) $ is open and dense in $ N $. Then since $ M $ is second countable
we can find a countable open cover $\lbrace Z_i \rbrace$ of $ M $ with $ \mathcal{R}_f\vert \bar{Z_i}$ 
open and dense in $N$. Since $ \mathcal{R}_f = \bigcap _i \mathcal{R}_f \vert{\bar{Z_i}} $, it will follows
 $ \mathcal{R}_f $ is residual.\\
Choose $ m \in M $, we will construct a neighborhood $ Z $ of $ m $ so that
$ \mathcal{R}({f \vert { \bar Z}}) $ is open and dense.
By the local representation theorem we may find charts $ (U,\phi) $ at $ m $ and $( V, \psi) $ at $ f(m) $
 such that $ \phi (U) \subset E \times \mathbb{R}^n,\,\psi(V) \subset E \times \mathbb{R}^p $,  and local representative 
$ f_{\phi\psi}= \psi \circ f \circ \psi ^{-1} $ of $ f $ has the form $f_{\phi\psi}(e,x) = (e, \eta (e,x)) $ 
for $ (e,x) \in \phi(U) $, where $ E $ is a Fr\'{e}chet space, $ x \in \mathbb{R} ^n, e \in E,$ and
 $ \eta : \phi(U) \rightarrow \mathbb{R}^p $. The index of $ T_mf $ is $ n-p $ and so 
$ k > \max \lbrace n-p, 0 \rbrace $.\\
To show that $ \mathcal{R}({f \vert U}) $ is dense in $ N $, it is enough to show that 
$ \mathcal{R}(f_{\phi\psi}) $ is dense in $ E \times \mathbb{R}^p $. For $ e \in E,\,(e,x) \in \psi(U) $,
 define $ \eta_e(x) = \eta(e,x) $. For each $ e, \eta_e $ is a $ C^k$- map defined on an open set of 
$ \mathbb{R}^n $, then by Sard's theorem, $ \mathcal{R}(\eta_e )$ is dense in $ \mathbb{R}^p $ for each $ e \in E $.
 But for $ (e,x) \in \psi(U) $, we have 
\begin{equation*}
\operatorname{D}f_{\phi \psi}(e,x) =
\begin{pmatrix}
I & 0 \\ * & \operatorname{D}\eta_e(x)
\end{pmatrix}
\end{equation*}
So $ \operatorname{D}f_{\phi \psi}(e,x) $ is surjective if and only if $\operatorname{D}\eta_e(x)  $ 
is surjective, hence for $ e \in E $
\begin{equation*}
\lbrace e \rbrace \times \mathcal{R}({\eta}_e) = \mathcal{R}(f _{\phi \psi}) \cap (\lbrace e\rbrace \times \mathbb{R}^p)
\end{equation*}
and so $ \mathcal{R}(f_{\phi \psi}) $ intersects every plane $\lbrace e\rbrace \times \mathbb{R}^p  $ in a dense set 
and is, therefore, dense in $ E \times \mathbb{R}^p $. So $ \mathcal{R}({f \vert U}) $ is dense.\\
Since $ f $ is locally closed we can chose an open neighborhood $ Z $ of $ m $ such that
 $ \bar{Z} \subset U$ and $ f \vert_{\bar Z} $ is closed. Since  $ \mathcal{C}_f $ is closed in $ M $,
 then $ f (\bar{Z} \cap \mathcal{C}_f) $ is closed in $ N $, and so
$ \mathcal{R}({f \vert {\bar{Z}}}) = N \setminus f(\bar Z \cap C_f)$ is open in $ N $. Since
$ \mathcal{R}({f \vert U}) \subset \mathcal{R}({f\vert{\bar{Z}}})  $ then $  \mathcal{R}({f\vert{\bar{Z}}}) $
is dense as well. This complete the proof.
\end{proof}

\end{document}